\DeclareMathOperator{\Cov}{Cov}
\DeclareMathOperator{\Expectation}{\mathbb E}
\DeclareMathOperator{\Maxexp}{\mathcal E}
\DeclareMathOperator{\Prob}{\mathbb P}
\newcommand{\KH}[2]{\operatorname{DH}\left(#1 \middle| #2 \right)}
\newcommand{\Lexp}[1]{\orliczof {(\cosh-1)} #1}
\newcommand{\avalof}[1]{\left\vert#1\right\vert}
\newcommand{\covat}[3]{\Cov_{#1}\left(#2,#3\right)}
\newcommand{\cpoly}[2]{C^{#1}_{\text{poly}}\left(\reals^{#2}\right)}
\newcommand{\derivby}[1]{\frac{d}{d#1}}
\newcommand{\euler}{\mathrm{e}}
\newcommand{\expectat}[2]{\Expectation_{#1}\left[#2\right]}
\newcommand{\expfiberat}[2]{S_{#1}\maxexpat{#2}}
\newcommand{\expof}[1]{\exp\left(#1\right)}
\newcommand{\gaussdensity}{\gamma}
\newcommand{\gaussint}[2]{\int{#1}\ \gaussdensity(#2) \ d#2 \ }
\newcommand{\maxexpat}[1]{\Maxexp\left(#1\right)}
\newcommand{\normat}[2]{\left\Vert#2\right\Vert_{#1}}
\newcommand{\orliczof}[2]{L_{#1}\left(#2\right)}
\newcommand{\orliczpof}[3]{L_{#1}^{#2}\left(#3\right)}
\newcommand{\pderivby}[1]{\frac {\partial} {\partial #1}}
\newcommand{\probat}[2]{\Prob_{#1}\left(#2\right)}
\newcommand{\reals}{\mathbb{R}}
\newcommand{\scalarat}[3]{\left\langle#2,#3\right\rangle_{#1}}
\newcommand{\setof}[2]{\left\{#1 \, \middle| \, #2 \right\}}
\begin{document}

\title*{Information Geometry of smooth densities on the Gaussian space: Poincar\'e inequalities\thanks{The author is supported by de Castro Statistics, Collegio Carlo Alberto, Turin, Italy. He is a member of GNAMPA-INDAM.}}
\author{Giovanni Pistone} 
\institute{de Castro Statistics, Collegio Carlo Alberto, Piazza
  Arbarello 8, 10122 Torino, Italy}
%\email{giovanni.pistone@carloalberto.com}
\authorrunning{G. Pistone}
\titlerunning{IG of smooth densities}
\maketitle

\begin{abstract}
We derive bounds for the Orlicz norm of the deviation of a random variable defined on $\reals^n$ from its Gaussian mean value. The random variables are assumed to be smooth, and the bound itself depends on the Orlicz norm of the gradient. We shortly discuss possible applications to non-parametric Information Geometry.  
\keywords{Gaussian Poincar\'e-Wirtinger Inequality, Gaussian Space, Non-parametric Information Geometry, Orlicz Spaces}
\end{abstract}

\section{Introduction}
\label{sec:introduction}

In a series of papers
\cite{pistone:2013Entropy,lods|pistone:2015,pistone:2017-GSI2017,pistone:2018-IGAIA-IV}
we have explored a version of the non-parametric Information Geometry
(IG) for smooth densities on $\reals^n$. Especially, we have
considered the IG associated to Orlicz spaces on the Gaussian
space. The analysis of the Gaussian space is discussed, for example,
in \cite{malliavin:1997,nourdin|peccati:2012}. This set-up provides a
simple way to construct a statistical manifold modelled on Banach
spaces of smooth densities. Other modelling options are in fact
available, for example the global analysis methods of
\cite{kriegl|michor:1997}, but we prefer to work with assumptions that
allow for the use of classical infinite dimensional differential
geometry modelled on B-spaces as in \cite{lang:1995}.

The present note focuses on technical results about useful
differential inequalities and does not consider in detail the
applications. However, we have in mind two main examples of potential applications. The first one is the statistical estimation method based on Hyv\"arinen's divergence,
\begin{equation}
  \label{eq:hyvarinen}
  \KH P Q = \frac12 \int \avalof{\nabla \log P(x) - \nabla \log Q(x)}^2 \ P(x) \ dx \ , 
\end{equation}
where $\avalof{\cdot}$ denotes the Euclidean norm of $\reals^n$, $P, Q$ are positive probability densities of the $n$-dimensional Lebesgue space, see in \cite{hyvarinen:2005,lods|pistone:2015}. The second one is the Otto's inner product \cite{otto:2001,lott:2008calculations}, which is defined by
\begin{equation}
  \label{eq:otto}
  \scalarat P f g = \int \nabla f(x) \cdot \nabla g(x) \ P(x) \ dx \ , 
\end{equation}
where $P$ is a probability density and $f,g$ are smooth random variables such that $\expectat P f = \expectat P g = 0$.

We focus on the exponential representation of positive densities $P =
p \cdot \gaussdensity = \euler^{u - K(u)} \cdot \gaussdensity$, where
$\gaussdensity$ is the standard Gaussian density. The sufficient
statistics $u$ is assumed to belong to an exponential Orlicz space and
$\gaussint {u(x)} x = 0$. The set of all such couples $(p,u)$ is
called statistical bundle. There are other ways to represent positive
densities, namely, those that use deformed exponential functions, $p
\propto \exp_A$ \cite{naudts:2011GTh}. This approach is intended to
avoid the difficulty of the exponential growth and, for this reason,
provides a somehow simpler treatment of smoothness, see
\cite{newton:2019LNCS,newton:2019IG}. We do not further discuss here this interesting formalism.

This paper is organized as follows. In \cref{sec:stat-grad-model}, we provide a recap of basic facts about non-parametric IG and introduce the Gaussian case. The results about Poincar\'e-Wirtinger inequalities are gathered in \cref{sec:gauss-poinc-ineq}. This section contains the main contributions of the paper. A collection of simple examples of possible applications concludes the paper.

\section{Statistical bundle modelled on Orlicz spaces}
\label{sec:stat-grad-model}

First, we review below the theory of Orlicz spaces in order to fix convenient notation. The full theory is offered, for example, in \cite[Ch.~II]{musielak:1983} and \cite[Ch.~VII]{adams|fournier:2003}.

\subsection{Orlicz spaces}
\label{sec:orlicz-spaces}

In this paper, we will need the following special type of Young function. Cf. \cite[\S 7]{musielak:1983} for a more general case.

Assume $\phi \in C[0,+\infty[$ is such that: 1) $\phi(0)=0$; 2) $\phi(u)$ is strictly increasing; 3) $\lim_{u \to +\infty} \phi(u) = +\infty$. The primitive function
\begin{equation*}
  \Phi(x) = \int_0^x \phi(u) \ du \ , \quad x \geq 0 \ ,
\end{equation*}
is strictly convex and will be called a \emph{Young function}. Cf. \cite[\S~8.2]{adams|fournier:2003}, where $\phi$ is assumed to be right-continous and non-decreasing.

The inverse function $\psi = \phi^{-1}$ has the same properties 1) to 3) as $\phi$, so that its primitive
\begin{equation*}
  \Psi(y) = \int_0^y \psi(v) \ dv \ , \quad y \geq 0 \ ,
\end{equation*}
is again a Young function. The couple $(\Phi, \Psi)$, is a couple of \emph{conjugate} Young functions. The relation is symmetric and we write both $\Psi=\Phi_*$ and $\Phi = \Psi_*$. The Young inequality holds true,
\begin{equation*}
  \Phi(x) + \Psi(y) \geq xy \ , \quad x,y \geq 0 \ ,
\end{equation*}
and the Legendre equality holds true ,
\begin{equation*}
  \Phi(x) + \Psi(\phi(x)) = x \phi(x) \ , \quad x \geq 0 \ .
\end{equation*}

Here are the specific cases we are going to use:
\begin{align}
&\Phi(x) = \frac {x^p} p \ , \quad \Psi(y) = \frac {y^q}q \ , \quad p,q > 1 \  , \quad \frac1p+\frac1q = 1 \ ;   \label{eq:cases-1} \\
  &\exp_2(x) = \euler^x - 1 - x \ , \quad (\exp_2)_*(y) = (1+y)\log(1+y) - y \ ; \label{eq:cases-2} \\
  &(\cosh-1)(x) = \cosh x - 1 \ , \quad (\cosh-1)_*(y) = \int_0^y \sinh^{-1}(v) \ dv \ ; \label{eq:cases-3} \\
&\operatorname{gauss}_2(x) = \expof{\frac12x^2}-1 \ . \label{eq:cases-4}
\end{align}

Given a Young function $\Phi$, and a probability measure $\mu$, the
Orlicz space $\orliczof \Phi \mu$ is the Banach space whose closed
unit ball is $\setof{f \in L^0(\mu)}{\int \Phi(\avalof f) \ d\mu \leq
  1}$. This defines the Luxemburg norm,
\begin{equation*}
  \normat {\orliczof \Phi \mu} f \leq \alpha \quad \text{if, and only if,} \quad \int \Phi(\alpha^{-1} \avalof f) \ d\mu   \leq 1 \ .
\end{equation*}

From the Young inequality, it holds
\begin{equation*}
  \int \avalof{uv} \ d\mu \leq \int \Phi(\avalof u) \ d\mu + \int \Phi_*(\avalof v) \ d\mu \ .
\end{equation*}
This provides a separating duality $\scalarat \mu u v = \int uv \ d\mu$ of $\orliczof {\Phi} \mu$ and $\orliczof {\Phi_*} \mu$ such that
\begin{equation*}
  \scalarat \mu u v \leq 2 \normat {\orliczof{\Phi} \mu} u  \normat {\orliczof {\Phi_*} \mu} v \ .
\end{equation*}
From the conjugation between $\Phi$ and $\Psi$, an equivalent norm can
be defined, namely, the Orlicz norm
\begin{equation*}
  \normat {{\orliczof {\Phi} \mu }^*} f = \sup \setof{\scalarat \mu f
    g}{\normat {\orliczof {\Psi} \mu} f \leq 1} \ .
\end{equation*}

Domination relation between Young functions imply continuous injection
properties for the corresponding Orlicz spaces. We say that $\Phi_2$
\emph{eventually dominates} $\Phi_1$, written $\Phi_1 \prec \Phi_2$,
if there is a constant $\kappa$ such that $\Phi_1(x) \leq \Phi_2(\kappa x)$
for all $x$ larger than some $\bar x$. As, in our case, $\mu$ is a
probability measure, the continuous embedding $\orliczof {\Phi_2} \mu
\to \orliczof {\Phi_1} \mu$ holds if, and only if, $\Phi_1 \prec
\Phi_2$. See a proof in \cite[Th. 8.2]{adams|fournier:2003}. If
$\Phi_1 \prec \Phi_2$, then $(\Phi_2)_* \prec (\Phi_1)_*$. With
reference to our examples \eqref{eq:cases-2} and \eqref{eq:cases-3},
we see that $\exp_2$ and $(\cosh-1)$ are equivalent. They both are
eventually dominated by $\operatorname{gauss}_2$ \eqref{eq:cases-4}
and eventually dominate all powers \eqref{eq:cases-1}.   

A special case occurs when there exists a function $C$ such that $\Phi(ax) \leq C(a) \Phi(x)$ for all $a \geq 0$. This is true, for example, for a power function and in the case of the functions $(\exp_2)_*$ and $(\cosh-1)_*$. In such a case, the conjugate space and the dual space are equal and bounded functions are a dense set.

The spaces corresponding to case \eqref{eq:cases-1} are ordinary
Lebesgue spaces. The cases \eqref{eq:cases-2} and \eqref{eq:cases-3}
provide isomorphic B-spaces
$\orliczof {(\cosh-1)} \mu \leftrightarrow \orliczof {\exp_2} \mu$
which are of special interest for us as they provide the model spaces
for our non-parametric version of IG, see
\cref{sec:exponential-bundle} below.

A function $f$ belongs to $\orliczof {\cosh-1} \mu$ if, and only if, it is \emph{sub-exponential}, that is, there exist constants $C_1,C_2 > 0$ such that
\begin{equation*}
  \probat \mu {\avalof{f} \geq t} \leq C_1 \expof{-C_2 t} \ , \quad t \geq 0 \ .
\end{equation*}
Sub-exponential random variable are of special interest in
applications because they admit an explicit exponential bounds in the
Law of Large Numbers. Random variables whose square is sub-exponential
are called sub-gaussian. There is a large literature on this subject, see, for example,  \cite{buldygin|kozachenko:2000,vershynin:2018-HDP,wainwright:2019-HDS,siri|trivellato:robust}.

We will be led to use a further notation. For each Young function
$\Phi$, the function $\overline \Phi(x) =  \Phi(x^2)$ is again a Young
function such that $\normat {\orliczof {\overline \Phi} \mu} f \leq
\lambda$ if, and only if, $\normat {\orliczof \Phi \mu} {\avalof f ^2}
\leq \lambda^2$. We denote the resulting space by $\orliczpof \Phi 2
\mu$. For example, $\operatorname{gauss}_2$ and $\overline{\cosh-1}$
are $\prec$-equivalent , hence the isomorphisn $\orliczof
{\operatorname{gauss}_2} \mu \leftrightarrow \orliczpof {(\cosh-1)} 2 \mu$.

As an application of this notation, consider that for each increasing
convex $\Phi$ it holds
$\Phi(fg) \leq \Phi((f^2+g^2)/2) \leq (\Phi(f^2) + \Phi(g^2))/2$. It
follows that when the $\orliczpof \Phi 2 \mu$-norm of $f$ and of $g$
is bounded by one, the $\orliczof \Phi \mu$-norm of $f$, $g$, and
$fg$, are all bounded by one. The need to control the product of two
random variables in $\orliczof {(\cosh-1)} \mu$ appears, for example,
in the study of the covariant derivatives of the statistical bundle,
see
\cite{gibilisco|pistone:98,lott:2008calculations,pistone:2018Lagrange,chirco|malago|pistone:2020-2009.09431}.

\subsection{Calculus of the Gaussian space}
\label{sec:calc-gauss-space}

From now on, our base probability space is the Gaussian probability space $(\reals^n,\gaussdensity)$, $\gaussdensity(z) = (2\pi)^{n/2} \expof{- \avalof z ^2 / 2}$. We will use a few simple facts about the analysis of the Gaussian space, see \cite[Ch.~V]{malliavin:1995}.

Let us denote by $C^k_\text{poly}(\reals^n)$, $k = 0,1,\dots$, the vector space of functions which are differentiable up to order $k$ and which are bounded, together with all derivatives, by a polynomial. This class of functions is dense in $L^2(\gaussdensity)$. For each couple $f,g \in \cpoly 1 n$, we have
\begin{equation*}
  \gaussint {f(x) \ \partial_i g(x)} x =   \gaussint {\delta_i f(x) \ g(x)} x \ ,
\end{equation*}
where the divergence operator $\delta_i$ is defined by $\delta_i f(x) = x_i f(x) - \partial_i f(x)$. Multidimensional notations will be used, for example,
\begin{equation*}
  \gaussint {\nabla f(x) \cdot \nabla g(x)} x = \gaussint {f(x) \ \delta \cdot \nabla g(x)} x \ , \quad f,g \in \cpoly 2 n \ ,
\end{equation*}
with $\delta \cdot \nabla g(x) = x \cdot \nabla g(x) - \Delta g(x)$.

For example, in this notation, the divergence of \cref{eq:hyvarinen} with $P = p\cdot \gaussdensity$, $Q = q \cdot \gaussdensity$, and $p,q \in \cpoly 2 n$, becomes
\begin{multline*} 
  \frac12 \gaussint  {\nabla \log \frac {p(x)}{q(x)} \cdot \nabla \log \frac {p(x)}{q(x)} \  p(x)} x = \\  \frac12 \gaussint  {\log \frac {p(x)}{q(x)} \ \delta \cdot \left( \nabla \log \frac {p(x)}{q(x)} \  p(x)\right)} x \ .
\end{multline*}

The inner product \cref{eq:otto} becomes, with $P = p \cdot \gaussdensity$ and $f,g,p  \in \cpoly 2 n$,
\begin{equation*}
  \gaussint {\nabla f(x) \cdot \nabla g(x) \ p(x)} x  = \gaussint { f(x) \delta \cdot \nabla (g(x) p(x))} x \ .
\end{equation*}

Hermite polynomials $H_\alpha = \delta^\alpha 1$ provide an orthogonal basis for $L^2(\gaussdensity)$ such that $\partial_i H_\alpha = \alpha_i H_{\alpha - e_i}$, $e_1$ the $i$-th element of the standard basis of $\reals^n$. In turn, this provides a way to prove that there is a closure of both operator $\partial_i$ and $\delta_i$ on a domain which is an Hilbert subspace of $L^2(\gaussdensity)$. Such a space is denoted by $D^2$ in \cite{malliavin:1995}. Moreover, the closure of $\partial_i$ is the infinitesimal generator of the translation operator, \cite{malliavin:1997,bogachev:2010}. The space $D^2$ is a Sobolev Space with Gaussian weight based on the $L^2$ norm, \cite{adams|fournier:2003}. By replacing that norm with a $(\cosh-1)$ Orlicz norm, one derives the applications to IG that are presented in \cite{lods|pistone:2015,pistone:2018-IGAIA-IV}.

\subsection{Exponential statistical bundle}
\label{sec:exponential-bundle}

We refer to \cite{pistone:2013GSI,pistone:2018-IGAIA-IV} for the definition of maximal exponential manifold $\maxexpat \gaussdensity$, and of statistical bundle $S \maxexpat \gaussdensity$. Below we report the results that are necessary in the context of the present paper.

A key result is the proof of the following statement of necessary and sufficient conditions, see \cite{cena|pistone:2007} and \cite[Th.~4.7]{santacroce|siri|trivellato:2016}.

\begin{proposition}\label{prop:portmanteau}
  For all $p, q \in \maxexpat \gaussdensity$ it holds $q = \euler^{u - K_p(u)} \cdot p$, where $u \in \orliczof {(\cosh-1)} \gaussdensity$, $\expectat p u = 0$, and $u$ belongs to the interior of the proper domain of the convex function $K_p$. This property is equivalent to any of the following:
\begin{enumerate}\label{portmanteaux}
\item\label{portmanteaux-1} $p$ and $q$ are connected by an open exponential arc;
\item\label{portmanteaux-2} $\Lexp p = \Lexp q$ and the norms are equivalent;
\item\label{portmanteaux-3}
  $p/q \in \cup_{a > 1} L^a(q)$ and $q/p \in \cup_{a>1} L^a(p)$.
\end{enumerate}
\end{proposition}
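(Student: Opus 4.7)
The plan is to prove the portmanteau by a cycle of implications: the main statement $\Leftrightarrow$ (1) $\Rightarrow$ (3) $\Rightarrow$ (2) $\Rightarrow$ the main statement. The equivalence of the main statement with (1) is essentially the definition of an open exponential arc: given $u$ in the interior of the proper domain of the convex function $K_p$, the curve $p_t = \euler^{tu - K_p(tu)} \cdot p$ is a well-defined family of probability densities for $t$ in an open interval containing $[0,1]$, furnishing the arc joining $p = p_0$ to $q = p_1$; conversely, an open exponential arc yields such a representation after a shift of the parameter.

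For (1) $\Rightarrow$ (3), I would exploit that an open arc extends slightly past $t = 1$: there is $\delta > 0$ with $(1+\delta) u$ still in the interior of the domain of $K_p$, and the $p$-expectation of $(q/p)^{1+\delta} = \euler^{(1+\delta)(u - K_p(u))}$ equals $\expof{K_p((1+\delta)u) - (1+\delta) K_p(u)} < \infty$. The symmetric statement for $p/q$ under $q$ follows by reparametrizing the same arc around $q$. For (3) $\Rightarrow$ (2), the key step is a H\"older estimate: if $q/p \in L^a(p)$ with $a > 1$ and $f$ lies in the unit ball of $\Lexp p$, then
\begin{equation*}
  \expectat q {\coshof{\alpha f} - 1} = \expectat p {(\coshof{\alpha f} - 1) \cdot (q/p)} \leq \normat{L^{a'}(p)}{\coshof{\alpha f} - 1} \ \normat{L^a(p)}{q/p}
\end{equation*}
with $a' = a/(a-1)$; since $(\cosh - 1)^{a'}$ is $\prec$-dominated by a rescaling of $(\cosh - 1)$ itself, choosing $\alpha$ small enough bounds the left-hand side by $1$, giving the continuous inclusion $\Lexp p \hookrightarrow \Lexp q$. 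Symmetry and the open mapping theorem then yield equivalence of norms. The closing implication (2) $\Rightarrow$ main is obtained by setting $u = \log(q/p) - \expectat p {\log(q/p)}$ and using the norm equivalence to show that $K_p(tu)$ is finite on an open interval containing $1$.

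The main obstacle is this last implication (2) $\Rightarrow$ main: extracting from the Banach-space identity of Orlicz spaces the sharper analytic fact that the effective domain of $K_p$ contains $u$ in its interior. A natural route is a contrapositive argument: were $1$ on the boundary of the domain of $t \mapsto K_p(tu)$, a Fatou or truncation construction would produce a random variable that is sub-exponential under $p$ but not under $q$, contradicting (2). The remaining work is mostly bookkeeping to close the cycle and to verify integrability of the intermediate objects; the substantive analytic content is concentrated in the H\"older step and in this boundary argument.
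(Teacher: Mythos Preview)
The paper does not supply its own proof of this proposition; it is quoted as a known result with references to \cite{cena|pistone:2007} and \cite[Th.~4.7]{santacroce|siri|trivellato:2016}, and no argument appears in the present text. So there is nothing in this paper to compare your attempt against. That said, your cycle of implications is essentially the route taken in those cited works, so your proposal is aligned with the literature the paper defers to.

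One remark on the step (2) $\Rightarrow$ main, which you rightly flag as the delicate one. In the cited references the argument is more direct than the contrapositive boundary construction you sketch: since $p,q \in \maxexpat\gaussdensity$ by hypothesis, $\log(q/p)$ is, up to an additive constant, the difference of two elements of $\Lexp\gaussdensity$, hence already lies in $\Lexp\gaussdensity = \Lexp p$. The interior condition on $u$ is then obtained by showing $\expectat p {\euler^{tu}} < \infty$ for $t$ in an open neighbourhood of $[0,1]$, which reduces to a moment bound that the norm equivalence in (2), or more transparently the integrability in (3), supplies directly. Your Fatou/truncation idea is not wrong in spirit, but it is more circuitous than necessary here.
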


\Cref{portmanteaux-2} ensures that all the fibers of the statistical bundle, namely $\expfiberat p \gaussdensity$, $p \in \maxexpat \gaussdensity$,  are isomorphic. \Cref{portmanteaux-3} gives a explicit description of the exponential manifold. For example, let $p$ be a positive probability density with respect to $\gaussdensity$, and take $q=1$ and $a = 2$. Then sufficient conditions for $p \in \maxexpat \gaussdensity$ are
\begin{equation*}
  \gaussint {p(x)^2} x < \infty \quad \text{and} \quad \gaussint {\frac1{p(x)}} x < \infty \ .
\end{equation*}
It is interesting to note that there is, so to say, a bound above and a bound below.

\section{Bounding the Orlicz norm with the Orlicz norm of the gradient}
\label{sec:gauss-poinc-ineq}

We discuss now inequalities related to the classical Gauss-Poincar\'e inequality,
\begin{equation}\label{eq:gauss-poincare}
\gaussint {\left(f(x) - \gaussint {f(y)} y \right)^2} x \le \gaussint {\avalof{\nabla f(x)}^2} x \ ,
\end{equation}
where $f \in C^1_\text{poly}(\reals^n)$. A proof is given, for example, in \cite[\S~1.4]{nourdin|peccati:2012} and will follow as a particular case in an inequality to be proved below.

In terms of norms, the inequality above is equivalent to $\normat {L^2(\gaussdensity)}{f - \overline f} \leq \normat {L^2(\gaussdensity)} {\avalof{\nabla f}}$, where $\overline f = \gaussint {f(y)} y$. One can check whether the constant 1 is optimal, by taking $f(x) = \sum_i x_i$ and observing that the two sides both take the value $\sqrt n$.

This is an example of differential inequality of high interest. For example, if $p \in C^2_{\text{poly}}$ is a probability density with respect to $\gaussdensity$, then the $\chi^2$-divergence of $P = p \cdot \gaussdensity$ from $\gaussdensity$ is bounded as follows.
\begin{multline*}
D_{\chi^2}(P \vert \gaussdensity) = \gaussint {(p(x) - 1)^2} x \leq \\ \gaussint {\avalof{\nabla p(x)}^2} x = \gaussint {\delta \cdot \nabla p(x)\ p(x)} x \ , 
\end{multline*}
where $\delta \cdot \nabla p(x) = x \cdot \nabla p(x) - \Delta p(x)$. As $\gaussint {\delta\cdot\nabla p(x)} x = 0$, the RHS is equal to
\begin{multline*}
\gaussint {\delta \cdot \nabla p(x)(p(x)-1)} x \leq \\ \frac12 \gaussint {(\delta\cdot \nabla p(x))^2} x + \frac12 \gaussint {(p(x) - 1)^2} x \ , 
 \end{multline*}
so that, in conclusion, 
\begin{equation*}
  \gaussint {(p(x) - 1)^2} x \leq \gaussint {(\delta\cdot \nabla p(x))^2} x \ .
\end{equation*}

\subsection{Ornstein-Uhlenbeck semi-group}
\label{sec:ornst-uhlenb-semi}

Generalisation of \cref{eq:gauss-poincare} can be derived from the
Ornstein-Uhlenbeck semi-group which is defined on each
$C^k_\text{poly}(\reals^n)$, $k = 0,1,\dots$, by the Mehler formula
\begin{equation}\label{eq:OUsemigroup}
  P_tf(x) = \gaussint {f(\euler^{-t}x + \sqrt{1-\euler^{-2t}}y)} y, \quad t \ge 0,\quad   f \in C^k_\text{poly}(\reals^n) \ ,
\end{equation}
see \cite[V-1.5]{malliavin:1995} and \cite[\S~1.3]{nourdin|peccati:2012}. Notice that $P_0f = f$ and $P_\infty f = \overline f$.

If  $X$, $Y$ are independent standard Gaussian random variables in $\reals^n$, then
\begin{equation}\label{eq:unitary}
  X_t = \euler^{-t}X+\sqrt{1-\euler^{-2t}}Y, \quad Y_t=\sqrt{1-\euler^{-2t}}X-\euler^{-t}Y
\end{equation}
are independent standard Gaussian random variables for all $t \ge 0$. It is well known, and easily checked, that the infinitesimal generator of the Ornstein-Uhlembeck semi-group is $-\delta \cdot \nabla$, that is, for each $f \in \cpoly 2 n$, it holds
\begin{align}
  \derivby t P_tf(x) &= \gaussint {\nabla f(\euler^{-t}x+\sqrt{1 - \euler^{-2t}}y) \cdot \left(- \euler^{-t} x + \frac{\euler^{-2t}}{\sqrt{1-\euler^{-2t}}y}\right)} y \label{eq:generator} \\ &= - (\delta \cdot \nabla) P_t f(x) \label{eq:generator-1} \\ &= - P_t (\delta \cdot \nabla) f(x) \ . \label{eq:generator-2}
\end{align}
See \cite[V.1.5]{malliavin:1995}.

These computations are well known in stochastic calculus, see, for example \cite[\S~5.6]{karatzas|shreve:1991}. In fact, because of \cref{eq:generator-1}, the function $p(x,t) = P_tp(x)$ is a solution of the equation
\begin{equation*}
  \pderivby t p(x,t) - \Delta p(x,t) + x \cdot \nabla p(x,t) = 0 \ , \quad p(x,0)=p(x) \ ,
\end{equation*}
which is the Kolmogorov equation for the diffusion $dX_t = - X_t + \sqrt 2 dW_t$. Similarly, the function $u(x) = \int_0^\infty \euler^{-t} P_tf(x) \ dt$ is a solution of the equation
\begin{equation*}
  \delta \cdot \nabla u(x) + u(x) = f(x) \ .
\end{equation*}

By the change of variable \cref{eq:unitary} and Jensen's inequality, it easily follows that for each convex function $\Phi$ it holds
\begin{equation} \label{eq:OU-non-expansive}
  \gaussint {\Phi(P_tf(x))} x \leq \gaussint {\Phi(f(x))} x \ .
\end{equation}
That is, for all $t \ge 0$, the mapping $f \mapsto P_tf$ is non-expansive  for the norm of each Orlicz space $\orliczof \Phi \gaussdensity$.

We will discuss now a first set of inequalities that involves convexity and differentiation as it is in \cref{eq:gauss-poincare}. This set depends on the following proposition.

\begin{proposition}
For all $\Phi\colon \reals$ convex and all $f \in C^1_{\text{\normalfont poly}}(\reals^n)$, it holds
\begin{multline}\label{eq:first-version}
 \gaussint {\Phi\left(f(x) - \gaussint {f(y)} y\right)} x  \leq \\ 
 \iint \Phi\left(\frac\pi2 \nabla f(x) \cdot y\right) \ \gaussdensity(x) \gaussdensity(y) \ dx dy = \\
\frac1{\sqrt{2\pi}} \iint \Phi\left(\frac\pi2\avalof{\nabla f(x)}z\right) \ \euler^{-z^2/2} \gaussdensity(x) \ dz dx = \gaussint {\widetilde \Phi\left(\avalof{\nabla f(x)}\right)} x \ , 
\end{multline}
where $\widetilde \Phi$ is the convex function defined by
\begin{equation}
  \label{eq:tildePhi}
  \widetilde \Phi(a) = \gaussint {\Phi\left(\frac\pi2 az\right)} z \ .
\end{equation}
\end{proposition}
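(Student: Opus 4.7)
The plan is to prove this via Pisier's rotation trick, which is a more quantitative alternative to the semigroup chain rule. I would introduce the angular parameterization, implicit in \cref{eq:unitary} under the substitution $\euler^{-t} = \cos\theta$: set $X_\theta = \cos\theta \, X + \sin\theta \, Y$ and $Y_\theta = \sin\theta \, X - \cos\theta \, Y$ for $\theta \in [0,\pi/2]$, where $X,Y$ are independent standard Gaussians on $\reals^n$. The orthogonality of the matrix shows that $(X_\theta, Y_\theta)$ is again a pair of independent standard Gaussians for each $\theta$, and a direct computation gives $\derivby\theta X_\theta = -Y_\theta$. Since $X_0 = X$ and $X_{\pi/2} = Y$, the fundamental theorem of calculus applied to $\theta \mapsto f(X_\theta)$ yields the identity
\begin{equation*}
  f(X) - f(Y) = \int_0^{\pi/2} \nabla f(X_\theta) \cdot Y_\theta \ d\theta.
\end{equation*}

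Next I would take the conditional expectation with respect to $Y$ to obtain $f(X) - \gaussint{f(y)}{y}$ on the left, then apply Jensen's inequality twice with the convex function $\Phi$: once to pull $\Phi$ inside $\Expectation_Y$, and once against the uniform probability measure $\frac{2}{\pi}\, d\theta$ on $[0,\pi/2]$, which is where the constant $\pi/2$ enters. This produces
\begin{equation*}
  \Phi\!\left(f(X) - \gaussint{f(y)}{y}\right) \leq \Expectation_Y \frac{2}{\pi} \int_0^{\pi/2} \Phi\!\left(\frac{\pi}{2} \nabla f(X_\theta) \cdot Y_\theta\right) d\theta.
\end{equation*}
Taking expectation in $X$ and invoking Fubini, the inner expectation $\Expectation_{X,Y}\Phi(\tfrac\pi2 \nabla f(X_\theta)\cdot Y_\theta)$ is independent of $\theta$ by rotation invariance of the law of $(X,Y)$, so the $\theta$-integral collapses and the right-hand side becomes $\Expectation_{X,Y}\Phi(\tfrac\pi2 \nabla f(X)\cdot Y)$, which is exactly the first inequality in \cref{eq:first-version}.

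For the asserted equality, I would condition on $X=x$: the random variable $\nabla f(x) \cdot Y$ is centered Gaussian with variance $\avalof{\nabla f(x)}^2$, so it is equal in law to $\avalof{\nabla f(x)} Z$ with $Z$ scalar standard Gaussian. Hence
\begin{equation*}
  \gaussint{\Phi\!\left(\frac\pi 2 \nabla f(x)\cdot y\right)}{y} = \gaussint{\Phi\!\left(\frac\pi 2 \avalof{\nabla f(x)} z\right)}{z} = \widetilde\Phi(\avalof{\nabla f(x)}),
\end{equation*}
and integrating in $x$ finishes the chain of equalities.

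The genuinely technical step is the double application of Jensen, in particular recognizing that introducing the factor $\pi/2$ in front of $\nabla f \cdot Y$ is exactly what is paid for turning an integral of length $\pi/2$ into a probability average; this is where the non-trivial constant appears, and it is also where the presence of $C^1_{\text{poly}}$ regularity is really used (so that $\theta\mapsto f(X_\theta)$ is absolutely continuous and Fubini applies without integrability issues). The rest—rotation invariance, conditioning, the change of variables in $Y$—is straightforward Gaussian calculus.
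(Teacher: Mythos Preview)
Your proof is correct and is essentially the paper's argument under the reparametrization $\euler^{-t}=\cos\theta$ that you already identify: the paper writes $f(x)-\overline f = -\int_0^\infty \derivby t P_t f(x)\,dt$, factors out the probability density $p(t)=\frac2\pi\,\euler^{-t}/\sqrt{1-\euler^{-2t}}$ (which is exactly your uniform $\frac2\pi\,d\theta$), and then applies Jensen together with the unitary change of variables~\eqref{eq:unitary}. The only cosmetic difference is that you work with $f(X)-f(Y)$ along the arc before averaging in $Y$, while the paper has $Y$ already integrated out via the Mehler formula; the sequence of ideas---fundamental theorem of calculus along the interpolation, Jensen against the arc-parameter probability, rotation invariance of $(X_\theta,Y_\theta)$---is identical.
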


\begin{proof}
It follows from \cref{eq:OUsemigroup,eq:generator} that
\begin{multline*} 
  f(x) - \gaussint {f(y)} y = P_0f(x) - P_\infty f(x) = - \int_0^\infty \derivby t P_tf(x) \ dt = \\
\frac \pi2 \int_0^\infty p(t) \ dt \int \left(\nabla f(\euler^{-t}x + \sqrt{1-\euler^{-2t}}y)\right) \cdot \left(\sqrt{1-\euler^{-2t}} x - \euler^{-t} y\right) \gaussdensity(y) \ dy \ , 
\end{multline*}
where $p(t) = \frac2\pi\frac{\euler^{-t}}{\sqrt{1-\euler^{-2t}}}$ is a probability density on $t \geq 0$. After that, the application of Jensen inequality and the change of variable \eqref{eq:unitary}, gives \cref{eq:first-version}. See more details in \cite{pistone:2018-IGAIA-IV}.
\end{proof}

The arguments used here differs from those used, for example, in \cite{nourdin|peccati:2012}, which are based on the equation for the infinitesimal generator \cref{eq:generator-1,eq:generator-2}. We will come to that point later. Notice that we can take $\Phi(s) = s^2$ and derive a Poincar\'e inequality with a non-optimal constant $> 1$. 

We can prove now a set of inequalities of the Poincar\'e type. The first example is $\Phi(s) = \euler^{s}$. In such a case, the equation for the moment generating function of the Gaussian distribution gives
\begin{equation*}
  \widetilde \Phi(a) = \gaussint {\expof{\frac\pi2 az}} z = \expof{\frac {\pi^2 a^2}8} \ ,  
\end{equation*}
so that the inequality \eqref{eq:first-version} becomes
\begin{equation*}
 \gaussint {\expof{f(x) - \overline f}} x  \leq \\ \gaussint {\expof{\frac {\pi^2}8 \avalof{\nabla f(x)}^2}} x \ . 
\end{equation*}

More clearly, we can change $f$ to $\frac{2\kappa}\pi f$ and write
\begin{multline}\label{eq:MGF-1}
 \gaussint {\expof{\frac{2\kappa}\pi \left(f(x) - \overline f\right)}} x  \leq \\ \gaussint {\expof{\frac{\kappa^2}2 \avalof{\nabla f(x)}^2}} x = \\ (2\pi)^{n/2} \int \expof{-\frac12\left(\avalof x ^2 - \kappa^2 \avalof{\nabla f(x)}^2\right)} \ dx \ . 
\end{multline}

The inequality above is non-trivial only if the RHS is bounded, that is
\begin{equation*}
  \avalof {\nabla f(x) } < \kappa^{-1} \avalof{x} \ , \quad x \in \reals^n \ ,
\end{equation*}
that is, the function $f$ is Lipschitz. We have found that $f \in C^1(\reals^n)$ and globally Lipschitz implies that $f$ is sub-exponential in the Gaussian space.

The first case of bound for Orlicz norms we consider is the Lebesgue norm, $\Phi(s) = s^{2p}$, $p > 1/2$. In such a case, 
\begin{equation*}
 \widetilde \Phi(a) =  \left(\frac \pi 2\right)^{2p} m(2p)\  a^{2p} \ ,
\end{equation*}
where $m(2p)$ is the $2p$-moment of the standard Gaussian distribution. It follows that
 \begin{equation*}
\normat {L^{2p}(\gaussdensity)} {f - \gaussint {f(y)} y} \leq \frac \pi 2 (m(2p))^{1/2p} \normat {L^{2p}(\gaussdensity)} {\avalof{\nabla f}} \ .
\end{equation*}

The cases $\Phi(a) = a^{2p}$ are special in that we can use the in the proof the multiplicative property $\Phi(ab) = \Phi(a)\Phi(b)$. The argument generalizes to the case where the convex function $\Phi$ is a Young function whose increase is controlled through a function $C$, $\Phi(uv) \leq C(u) \Phi(v)$, and, moreover, such that there exists a $\kappa > 0$ for which
\begin{equation*}
  \gaussint {C\left(\frac \pi 2 \kappa u\right)} u \leq 1 \ , 
\end{equation*}
then \cref{eq:tildePhi} becomes
\begin{equation*}
  \widetilde \Phi(\kappa a) = \gaussint {\Phi\left(\frac \pi2 \kappa a z\right)} z \leq \gaussint {C\left(\frac \pi 2 \kappa z\right)} z \ \Phi(a) \leq \Phi(a)\ .
\end{equation*}

By using this bound in \cref{eq:first-version}, we get
\begin{equation*}
   \gaussint {\Phi\left(\kappa \left(f(x) - \gaussint {f(y)} y\right)\right)} x  \leq \gaussint {\Phi\left(\avalof{\nabla f(x)}\right)} x \ .
 \end{equation*}

 Assume now  that $\normat {\orliczof {\Phi} {\gaussdensity}}
 {\avalof{\nabla f}}\leq 1$ so that the LHS does not exceed 1. Then
 $\kappa \normat {\orliczof {\Phi} {\gaussdensity}} {f -
     \overline f} \leq 1$, which, in turn, implies the inequality
   \begin{equation*}
     \normat {\orliczof {\Phi} {\gaussdensity}} {f-\overline f} \leq \kappa^{-1} \normat {\orliczof {\Phi} {\gaussdensity}} {\avalof{\nabla f}} \ . 
   \end{equation*}

 For example, for $(\exp_2)_*(y) = (1+y)\log(1+  y) - y$ we can take $C(u) = \max(\avalof u,\avalof u ^2)$ and we want a $\kappa > 0$ such that
 \begin{equation*} 
   \gaussint {\max\left(\frac\pi2 \kappa \avalof{u},\left(\frac\pi2 \kappa \avalof{u}\right)^2\right)} u \leq 1 \ .
 \end{equation*}
 Such a $\kappa$ exists because $C$ is $\gaussdensity$-integrable, continous, and $C(0)=0$. For example, as $C(u) \leq u + u^2$, $u \geq 0$, we have
 \begin{multline*}
   \gaussint {C\left(\frac\pi2 \kappa u\right)} u =  2 \int_0^\infty C\left(\frac\pi2 \kappa u\right) \ \gaussdensity(u) \ du \leq \\
\pi \kappa \int_0^\infty u \gaussdensity(u) \ du + \frac{\pi^2}2 \kappa^2 \int_0^\infty u^2 \gaussdensity(u) \ du = \sqrt{\frac\pi2} \kappa + \frac{\pi^2}4 \kappa^2  \end{multline*}
and we can take $k > 0$ satisfying $\sqrt{\frac\pi2} \kappa + \frac{\pi^2}4 \kappa^2 = 1$.

For us, it is of particular interest the case of the Young function $\Phi
= \cosh-1$, for which there is no such bound. Instead, we use \cref{eq:MGF-1} with $\kappa$ and $-\kappa$ to get
\begin{multline}\label{eq:cosh}
 \gaussint {(\cosh-1)\left(\frac{2\kappa}\pi \left(f(x) - \overline f\right)\right)} x  \leq \\ \gaussint {\operatorname{gauss}_2\left(\kappa \avalof{\nabla f(x)}\right)} x \ . 
\end{multline}
Now, if $\kappa = \normat {\orliczof {\operatorname{gauss}_2} {\gaussdensity}}{\avalof{\nabla f}}^{-1}$, then the LHS is smaller or equal then 1, and hence $2\kappa/\pi \normat {\orliczof {\cosh-1}{\gaussdensity}} {f - \overline f} \leq 1$. It follows that
\begin{equation*}
  \normat {\orliczof {\cosh-1}{\gaussdensity}} {f - \overline f} \leq \frac\pi2 \normat {\orliczof {\operatorname{gauss}_2} {\gaussdensity}}{\avalof{\nabla f}} \ .
\end{equation*}

Our last case of this series is the Young function $\operatorname{gauss}_2(x) = \expof{\frac12 \avalof x ^2} - 1$. Assume $f \in \cpoly 0 n \cap \orliczof {\operatorname{gauss}_2} {\gaussdensity}$, that is, there exists a constant $\lambda > 0$ such that
\begin{multline*}
  \gaussint {\operatorname{gauss}_2(\lambda^{-1} f(x))} x = \\
  (2\pi)^{-n/2}  \int \expof{-\frac12\left(\avalof x ^2 - \lambda^{-2} f(x)^2\right)} \ dx - 1 < + \infty \ . 
\end{multline*}

This holds if, and only if, $\avalof x ^2 > \lambda^{-2} \avalof{f(x)}^2$, $x \in \reals^n$, that is, $f$ is bounded by a linear function with coefficient $\lambda > \sup_x \avalof{f(x)}/\avalof{x}$. The case does not seem to be of our interest.

In fact, if we compute $\widetilde{\operatorname{gauss}_2}(\kappa a)$
from \cref{eq:tildePhi}, we find
\begin{multline*}
\gaussint {\operatorname{gauss}_2\left(\frac\pi2 \kappa a z\right)} z
= \\ \frac1{\sqrt{2\pi}} \int \expof{-\frac12\left( 1 -
    \left(\frac\pi2\right)^2 \kappa^{2} a^2\right)z^2} \ dz - 1 = \\
\left(1 - \left(\frac\pi2\right)^2 \kappa^{2} a^2\right)^{-1/2} - 1 \ .
\end{multline*}
if the argument of $(\cdot)^{-1/2}$ is positive, $+\infty$
otherwise. The inequality \cref{eq:first-version} becomes
\begin{equation*}
  \gaussint {\operatorname{gauss}_2\left(\kappa (f(x) - \overline f)\right)} x \leq
  \gaussint {\left(1 - \left(\frac\pi2\right)^2 \kappa^{2}
      \avalof{\nabla f(x)}^2\right)^{-1/2}} x - 1 \ .
\end{equation*}
The function in the RHS does not belong to the class of Young
function we are considering here and would require a special
study.

In the following proposition we give a summary of the inequalities proved so far.

\begin{proposition}
There exists constants $C_1$, $C_2(p)$, $C_3$ such that for all $f \in C^1_\text{\emph{poly}}(\reals^n)$ the following inequalities hold:
 \begin{equation}\label{eq:LlogL-bound}
   \normat {L_{(\exp_2)_*}(\gaussdensity)} {f - \gaussint {f(y)} y} \leq C_1 \normat {L_{(\exp_2)_*}(\gaussdensity)}  {\avalof{\nabla f}} \ .
 \end{equation}
 \begin{equation}\label{eq:2k-bound}
   \normat {L^{2p}(\gaussdensity)} {f - \gaussint {f(y)} y} \leq C_2(p) \normat {L^{2p}(\gaussdensity)} {\avalof{\nabla f}} \ , \quad p > 1/2 \ .
 \end{equation}
 \begin{equation}\label{eq:exp-bound}
           \normat {\Lexp \gaussdensity} {f - \gaussint {f(y)} y} \leq C_3 \normat {\orliczof {\operatorname{gauss}_2} \gaussdensity} {\avalof{\nabla f}} \ .
 \end{equation}
\end{proposition}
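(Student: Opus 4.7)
My plan is to derive all three inequalities from the master inequality \eqref{eq:first-version}, which states that $\gaussint {\Phi(f(x) - \overline f)} x \leq \gaussint {\widetilde \Phi(\avalof{\nabla f(x)})} x$ with $\widetilde\Phi(a) = \gaussint {\Phi(\frac\pi2 az)} z$. In each of the three cases the task reduces to controlling $\widetilde\Phi$ in terms of $\Phi$ itself, possibly after a rescaling, and then invoking the definition of the Luxemburg norm.

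For the Lebesgue bound \eqref{eq:2k-bound} with $\Phi(s)=s^{2p}$, $p>1/2$, the calculation is immediate because $\Phi$ is multiplicative: $\widetilde\Phi(a) = (\pi/2)^{2p} m(2p)\, a^{2p}$, where $m(2p)$ is the $2p$-th moment of the standard Gaussian. Inserting into \eqref{eq:first-version} and taking $2p$-th roots gives the bound with $C_2(p) = (\pi/2)\, m(2p)^{1/(2p)}$.

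For \eqref{eq:LlogL-bound} I would follow the multiplicative-control scheme already sketched before the statement. With $\Phi = (\exp_2)_*$ I use the inequality $\Phi(uv)\leq C(u)\Phi(v)$ with $C(u)=\max(\avalof u,u^2)$, and then choose $\kappa>0$ small enough that $\gaussint{C(\frac\pi2\kappa z)} z \leq 1$. The existence of such $\kappa$ is guaranteed because $C$ is continuous, vanishes at $0$, and is $\gaussdensity$-integrable; using the elementary majorisation $C(u)\leq u+u^2$ on $u\geq 0$ it suffices to solve the quadratic $\sqrt{\pi/2}\,\kappa + (\pi^2/4)\kappa^2 = 1$. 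With this $\kappa$ one has $\widetilde\Phi(\kappa a)\leq \Phi(a)$, so \eqref{eq:first-version} gives $\gaussint{\Phi(\kappa(f-\overline f))} x \leq \gaussint {\Phi(\avalof{\nabla f})} x$. Assuming $\normat{\orliczof{(\exp_2)_*} \gaussdensity}{\avalof{\nabla f}}\leq 1$ and unwinding the Luxemburg definition yields the inequality with $C_1 = \kappa^{-1}$; the general case follows by homogeneity.

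The main obstacle is the third bound \eqref{eq:exp-bound}, because $\cosh-1$ does not admit such a $C$-control. Here I would instead apply \eqref{eq:first-version} with the pure exponential $\Phi(s)=\euler^s$, for which the Gaussian moment generating function gives $\widetilde\Phi(a)=\expof{\pi^2 a^2/8}$. Replacing $f$ by $\pm(2\kappa/\pi)f$ yields \eqref{eq:MGF-1} with both signs, and averaging the two inequalities and subtracting $1$ produces the symmetric form \eqref{eq:cosh},
\[
\gaussint{(\cosh-1)\left(\frac{2\kappa}{\pi}(f(x)-\overline f)\right)} x \leq \gaussint{\operatorname{gauss}_2\left(\kappa \avalof{\nabla f(x)}\right)} x.
\]
Taking $\kappa = \normat{\orliczof{\operatorname{gauss}_2}\gaussdensity}{\avalof{\nabla f}}^{-1}$ forces the right-hand side to be at most $1$ by the definition of the Luxemburg norm, hence the left-hand side is at most $1$ as well, which gives $\normat{\Lexp\gaussdensity}{f-\overline f}\leq \pi/(2\kappa)$ and therefore $C_3=\pi/2$. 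The need to symmetrize, rather than apply \eqref{eq:first-version} directly to $\cosh-1$, is precisely what forces the norm on the right of \eqref{eq:exp-bound} to live in the larger space $\orliczof{\operatorname{gauss}_2}\gaussdensity$ and not in $\Lexp\gaussdensity$ itself.
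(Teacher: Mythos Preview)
Your proposal is correct and follows the paper's own argument essentially verbatim: all three bounds are deduced from the master inequality \eqref{eq:first-version}, with the multiplicative identity for $\Phi(s)=s^{2p}$, the $C(u)=\max(\avalof u,u^2)$ control for $(\exp_2)_*$, and the symmetrization of \eqref{eq:MGF-1} via $\pm\kappa$ for $(\cosh-1)$, yielding the same constants $C_2(p)=(\pi/2)m(2p)^{1/2p}$, $C_1=\kappa^{-1}$, and $C_3=\pi/2$.
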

Other equivalent norms could be used in the inequalities above. For
example,
$\orliczof {(\exp_2)_*} \gaussdensity \leftrightarrow \orliczof
{(\cosh-1)_*} \gaussdensity$ and
$\orliczof {\operatorname{gauss}_2} \gaussdensity \leftrightarrow
\orliczpof {\cosh-1} 2 \gaussdensity$. We do not care in the present
paper to define explicitly the relevant Gauss-Sobolev spaces as in
\cite{pistone:2018-IGAIA-IV}. But notice the special relevance of the
space based on the norm
$f \mapsto \normat {\orliczpof {\cosh-1} 2 \gaussdensity}
{\avalof{\nabla f}}$.
 
\subsection{Generator of the Ornstein-Uhlenbeck semi-group}
\label{sec:gener-ornst-uhlenb}

We consider now a further set of inequalities which are based on the use of infinitesimal generator $- \delta \cdot \nabla$ of the Ornstein-Uhlenbeck semigroup, see \cref{eq:generator-1,eq:generator-2}. Compare, for example, \cite[\S~1.3.7]{nourdin|peccati:2012}.

We have, for all $f \in \cpoly 2 n$, that
\begin{equation}\label{eq:usegenerator-1}
  f(x) - \overline f = - \int_0^{\infty} \derivby t P_t f(x) \ dt = \int_0^{\infty} \delta \cdot \nabla P_t f(x) \ dt \ .
\end{equation}
Note that
\begin{multline*} 
  \nabla P_t f(x) = \nabla \gaussint {f(\euler^{-t}x + \sqrt{1-\euler^{-2t}}y)} y = \\ \euler^{-t} \gaussint {\nabla f(\euler^{-t}x + \sqrt{1-\euler^{-2t}}y)} y = \euler^{-t} P_t \nabla f(x) \ ,
\end{multline*}
so that
\begin{equation*} 
 P_t \delta \cdot \nabla f(x) = \delta \cdot \nabla P_tf(x) = \euler^{-t} \delta \cdot P_t \nabla f(x) \ .
\end{equation*}
Now, \cref{eq:usegenerator-1} becomes
\begin{equation}
  \label{eq:usegenerator-3}
  f(x) - \overline f = \int_0^{\infty} \euler^{-t} \delta \cdot P_t \nabla f(x) \ dt \ .
\end{equation}

As
\begin{equation*}
  \gaussint {\delta \cdot \nabla f(x)} x = 0 \ ,
\end{equation*}
the covariance of $f,g \in \cpoly 0 n$ is
  \begin{multline*}
    \covat \gaussdensity f g = \\ \gaussint{(f(x) - \overline f)g(x)} x = \gaussint{(f(x) - \overline f)(g(x) - \overline g)} x \ .
  \end{multline*}

It follows that for all $f, g \in \cpoly 2 n$ we derive from \cref{eq:usegenerator-3} 
\begin{equation}\label{eq:OU-selfadjoint}
  \covat \gaussdensity f g = \int_0^{\infty} \euler^{-t} \gaussint {P_t \nabla f(x) \cdot \nabla g(x)} x \ dt \ . 
 \end{equation}

We use here a result of \cite[Prop. 5]{pistone:2018-IGAIA-IV}. Let $\avalof \cdot _1$ and $\avalof \cdot _2$ be two norms on $\reals^n$, such that $\avalof{x \cdot y} \le \avalof x _1 \avalof y _2$. For a Young function $\Phi$, consider the norm of $\orliczof {\Phi} \gaussdensity$ and the conjugate space endowed with the dual norm, 
 \begin{equation*}
   \normat {\orliczof {\Psi,*} \gaussdensity} f = \sup\setof{\int fg\  \gaussdensity}{\int \Phi(g)\ \gaussdensity \leq 1} \ .
 \end{equation*}

The following inequality that includes the standard Poincar\'e case when $\Phi(u) = u^2/2$. 
\begin{proposition}
Given a couple of conjugate Young function $\Phi$, $\Psi$, and norms $\avalof\cdot_1$, $\avalof\cdot _2$ on $\reals^n$ such that $x \cdot y \leq \avalof x _1 \avalof y _2$, $x,y \in \reals^n$, for all $f,g \in C^1_\text{\emph{poly}}(\reals^n)$, it holds
\begin{equation*}
  \avalof{\covat \gaussdensity  f g} \le \normat {\orliczof {\Phi} {\gaussdensity}}{\avalof{\nabla f}_1} \normat {\orliczof {\Psi,*} {\gaussdensity}} {\avalof{\nabla g}_2}  \ .  
\end{equation*}
\end{proposition}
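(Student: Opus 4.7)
The plan is to start from the identity \eqref{eq:OU-selfadjoint}, which already expresses the covariance as a time-integrated bilinear form in the gradients, and then apply three bounds in sequence: a pointwise dual-norm bound on $\reals^n$, Jensen's inequality to move the $|\cdot|_1$ inside the Ornstein-Uhlenbeck averaging, and the Orlicz duality together with the non-expansiveness \eqref{eq:OU-non-expansive}.

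First, by \eqref{eq:OU-selfadjoint} and the pointwise inequality $|x\cdot y|\le \avalof x_1 \avalof y_2$ applied to the integrand,
\begin{equation*}
  \avalof{\covat \gaussdensity f g} \le \int_0^\infty \euler^{-t} \gaussint{\avalof{P_t\nabla f(x)}_1 \avalof{\nabla g(x)}_2}x\ dt.
\end{equation*}
The quantity $P_t\nabla f$ is interpreted componentwise via Mehler's formula, so that
\begin{equation*}
  P_t\nabla f(x) = \gaussint{\nabla f(\euler^{-t}x+\sqrt{1-\euler^{-2t}}y)}y.
\end{equation*}
Applying Jensen's inequality to the convex norm $\avalof\cdot_1$ then gives the scalar bound $\avalof{P_t\nabla f(x)}_1 \le P_t(\avalof{\nabla f}_1)(x)$, which reduces the inner integral to a standard bilinear pairing of two non-negative scalar functions.

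Next, I would invoke the Orlicz duality inequality built into the definition of the $\normat{\orliczof{\Psi,*}\gaussdensity}{\cdot}$ norm: for every non-negative scalar $h$ and every $u\in \orliczof\Phi\gaussdensity$,
\begin{equation*}
  \gaussint{u(x)\,h(x)}x \le \normat{\orliczof\Phi\gaussdensity}{u}\,\normat{\orliczof{\Psi,*}\gaussdensity}{h}.
\end{equation*}
Applied with $u = P_t(\avalof{\nabla f}_1)$ and $h = \avalof{\nabla g}_2$, this yields the pointwise-in-$t$ estimate
\begin{equation*}
  \gaussint{P_t(\avalof{\nabla f}_1)(x)\,\avalof{\nabla g(x)}_2}x \le \normat{\orliczof\Phi\gaussdensity}{P_t(\avalof{\nabla f}_1)}\,\normat{\orliczof{\Psi,*}\gaussdensity}{\avalof{\nabla g}_2}.
\end{equation*}

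The final ingredient is non-expansiveness of the Ornstein-Uhlenbeck semigroup in Orlicz norms, \eqref{eq:OU-non-expansive}, which gives $\normat{\orliczof\Phi\gaussdensity}{P_t(\avalof{\nabla f}_1)} \le \normat{\orliczof\Phi\gaussdensity}{\avalof{\nabla f}_1}$. Plugging this in and using $\int_0^\infty \euler^{-t}\,dt = 1$ produces the stated inequality. The only step that is not entirely formal is the componentwise handling of $P_t\nabla f$ together with Jensen for $\avalof\cdot_1$; since $\avalof\cdot_1$ is an arbitrary norm on $\reals^n$ and not just a Euclidean one, one has to confirm that the convex-function Jensen inequality does pass through a vector-valued Gaussian integral, but this is immediate by dualising $\avalof v_1 = \sup_{\avalof\xi_{1,*}\le 1}\xi\cdot v$ and interchanging supremum with the linear Mehler integral.
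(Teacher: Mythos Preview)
Your proof is correct and follows exactly the route the paper sets up: the paper does not spell out a proof of this proposition (it is quoted from \cite[Prop.~5]{pistone:2018-IGAIA-IV}), but immediately before the statement it derives the covariance identity \eqref{eq:OU-selfadjoint} and it has already recorded the non-expansiveness \eqref{eq:OU-non-expansive} and the dual Orlicz norm, which are precisely the three ingredients you combine. Your chain \eqref{eq:OU-selfadjoint} $\to$ pointwise $|x\cdot y|\le|x|_1|y|_2$ $\to$ Jensen for $|\cdot|_1$ $\to$ Orlicz duality $\to$ \eqref{eq:OU-non-expansive} $\to$ $\int_0^\infty \euler^{-t}dt=1$ is the intended argument.
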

The case of our interest here is $\Phi = \cosh-1$,
$\Psi=(\cosh-1)_*$. As $(\cos-1)_* \prec (\cosh-1)$, it follows, in particular, that
$\covat \gaussdensity f f$ is bouded by a constant times $\normat {\orliczof
  {\cosh-1} \gaussdensity}
{\avalof{\nabla f}}^2$.

\section{Discussion and conclusions}
\label{sec:discussion}

We have collected here a list of possible applications of the information geometry of the Gaussian space that has been introduced in \cite{lods|pistone:2015,pistone:2018-IGAIA-IV} and further developed in the present paper.

\subsection{Sub-exponential random variables}
\label{sec:rand-vari-with}
Let $f \in \cpoly 2 n$ be a random variable of the Gaussian space. Assume moreover that $f$ is globally Lipschitz, that is,
\begin{equation*}
  \avalof{\nabla f(x)} \leq \normat {\operatorname{Lip}(\reals^n)} f\avalof x \,
\end{equation*}
where $\normat {\operatorname{Lip}(\reals^n)} f$ is the Lipschitz semi-norm, that is, the best constant. It follows from \cref{eq:cosh} that $f \in \orliczof {(\cosh-1)} \gaussdensity$ and the norm admits a computable bound.

If $p$ is any probability density of the maximal exponential model of $\gaussdensity$, that is, it is connected to 1 by an open exponential arc, then \cref{prop:portmanteau} implies that $f \in \orliczof {(\cosh-1)} p$, that is, $f$ is sub-exponential under the distribution $P = p \cdot \gaussdensity$. If the sequence $(X_n)_{n=1}^\infty$ is independent and with distribution $p \cdot \gaussdensity$, then the sequence of sample means will converge,
\begin{equation*}
  \lim_{n \to \infty} \frac 1n \sum_{j=1}^n f(X_j) = \gaussint {f(x) \ p(x)} x \ ,
\end{equation*}
with an exponential bound on the tail probability. See, for example, \cite[\S 2.8]{vershynin:2018-HDP}.

\subsection{Hyv\"arinen divergence}
\label{sec:hyvarinen-divergence}
Here we adapt \cite{parry|dawid|lauritzen:2012} to the Gaussian case. Consider the Hyv\"arinen divergence of \cref{eq:hyvarinen} in the Gaussian case, that is, $P = p \cdot \gaussdensity$ and $Q = q \cdot \gaussdensity$. As a function of $q$ is of the form
\begin{multline*}
  H(q) = \frac12 \gaussint {\avalof{\nabla \log p(x)}^2 p(x)} x + \\ \frac12 \gaussint {\avalof{\nabla \log q(x)}^2 p(x)} x - \gaussint {\nabla \log p(x) \cdot \nabla \log q(x) \ p(x) } x \ , 
\end{multline*}
where the first term does not depend on $q$ and the second term is an expectation with respect to $p \cdot \gamma$. As $\nabla \log p = p^{-1} \nabla p$,  the third term equals
\begin{equation*}
  - \gaussint {\delta \cdot \nabla \log q(x) \ p(x)} x \ ,
\end{equation*}
which is again a $p$-expectation. To minimize the Hyv\"arinen divergence we must minimize the $p$-expected value of the local score
\begin{equation*}
  S(q,x) = \frac12 \avalof{\nabla \log q(x)}^2 - \delta \cdot \nabla \log q(x)  
\end{equation*}

If $p$ and $q$ belong to the maximal exponential model of $\gaussdensity$, then $q = \euler^{u - K(u)}$ with $u \in \orliczof {(\cosh-1)} \gaussdensity$ and $\gaussint {u(x)} x = 0$. The local score becomes $\frac12 \avalof{\nabla u}^2 - \delta \cdot \nabla u$. To compute the $p$-expected value of the score with an independent sample of $p \cdot \gaussdensity$ we have interest to assume that the score is in $\orliczof {(\cosh-1)} \gaussdensity$, because this assumption implies the good convergence of the empirical means for all $p$, as it way explained in the section above.

Assume, for example, $\nabla u \in \orliczpof {(\cosh-1)} 2 \gaussdensity$. This implies directly $\avalof{\nabla u}^2 \in \orliczof {(\cosh -1)} \gaussdensity$. Moreover, we need to assume that the $\orliczof {(\cosh -1)} \gaussdensity$-norm of $\delta \cdot \nabla u$ is finite. Under such assumptions it seems reasonable to hope that the minimization on a suitable model of the sample expectation of the Hyv\"arinen score is consistent. 

\subsection{Otto's metric}
\label{sec:ottos-riem-metr}

Let $P = p \cdot \gaussdensity$ with $p$ in the maximal exponential model of $\gaussdensity$. Let $f$ and $g$ be in the $p$-fiber of the statistical manifold, that is, $f,g \in \orliczof {(\cosh -1)} p = \orliczof {(\cosh-1)} \gaussdensity$ and $\gaussint {f(x)} x = \gaussint {g(x)} x = 0$. The Otto's inner product \eqref{eq:otto} becomes
\begin{equation*}
  \gaussint {\nabla f(x) \cdot \nabla g(x) \  p(x)} x = \gaussint {f(x) \ \delta \cdot (p(x) \nabla g(x))} x \ . 
\end{equation*}
The LHS is well defined and regular if we assume $\nabla f, \nabla g \in \orliczpof {(\cosh -1)} 2 \gaussdensity$, because, in such a case, $\avalof{\nabla f}^2, \avalof{\nabla g}^2 \in \orliczof {(\cosh -1)} \gaussdensity = \orliczof {(\cosh -1)} p$. The RHS provides the representation of the inner product in the inner product defined in $\orliczof {(\cosh-1)} \gaussdensity$. Note that the mapping $g \mapsto \delta \cdot (p \nabla g)$ is 1-to-1 if $g$ is restricted by $\gaussint {g(x)p(x)} x = 0$. The inverse of this mapping provides the natural gradient of the Otto's inner product in the sense of \cite{amari:1998natural,li|montufar:2018}.  

\subsection{Conclusion and acknowledgments}

In this paper we have derived bounds of the Orlicz norms of interest
in IG based on the Orlicz norm of the gradient. The schematic examples
above provide, in our opinion, a motivation for further study of this
approach. There is a large literature on Sobolev spaces with weight
that we have, regrettably, not used here. Its study would surely
provide more precise and deep results than those presented here. I
like to thank the Editor and the Referees for the very helpful and
detailed review of this paper.

\end{document}